\documentclass[10pt]{article}

\usepackage[margin=1in]{geometry}
\usepackage{amsmath,amssymb,amsthm}
\usepackage{enumitem}
\usepackage[colorlinks=true,citecolor=blue,linkcolor=blue,urlcolor=blue]{hyperref}

\setlist[itemize]{itemsep=2pt,topsep=3pt}
\setlist[enumerate]{itemsep=2pt,topsep=3pt}

\newtheorem{theorem}{Theorem}[section]
\newtheorem{lemma}[theorem]{Lemma}
\newtheorem{proposition}[theorem]{Proposition}
\newtheorem{corollary}[theorem]{Corollary}
\newtheorem{problem}[theorem]{Problem}

\theoremstyle{definition}
\newtheorem{definition}[theorem]{Definition}
\newtheorem{algorithm}[theorem]{Algorithm}
\newtheorem{example}[theorem]{Example}

\theoremstyle{remark}
\newtheorem{remark}[theorem]{Remark}

\DeclareMathOperator{\spanop}{span}
\DeclareMathOperator{\rank}{rank}
\DeclareMathOperator{\codim}{codim}

\title{Extremal Subspace Covers in Finite Vector Spaces}

\author{
Mohsen Aliabadi\\
Department of Mathematics, Clayton State University\\
Morrow, GA, USA\\
\texttt{maliabadi@clayton.edu}\\
\texttt{mohsenmath88@gmail.com}
}

\date{}

\begin{document}

\maketitle

\begin{abstract}
Let \(V\) be an \(n\)-dimensional vector space over the finite field
\(\mathbb F_q\), where \(n\ge 2\). It is classical that \(q+1\) proper
subspaces are necessary and sufficient to cover \(V\). In this paper we
study extremal refinements of this covering theorem.

For \(1\le m\le q+1\), we determine the maximum possible size of a union
of \(m\) proper subspaces of \(V\). We prove that
\[
\max_{W_1,\ldots,W_m<V}
\left|W_1\cup\cdots\cup W_m\right|
=
q^{n-2}\bigl(1+m(q-1)\bigr).
\]
We also classify all equality cases: for \(m\ge 2\), equality occurs
precisely when the subspaces are distinct hyperplanes containing one
common codimension-two subspace. As consequences, we obtain a structural
classification of minimum covers of \(V\) by proper subspaces and a sharp
defect estimate for unions of \(q\) proper subspaces.

We then introduce basis-blocking families. A family of proper subspaces
is basis-blocking if its union meets every basis of \(V\). We prove that
the least size of a basis-blocking family is \(q\), and we classify all
basis-blocking families of this minimum size. Finally, we give the affine
analogue and record an elementary recognition procedure for extremal
families. The results give a compact extremal-combinatorial description
of how proper subspaces cover finite vector spaces.
\end{abstract}

\noindent
\textbf{Keywords:} finite vector spaces, finite geometry, subspace covers,
hyperplanes, extremal combinatorics, basis-blocking families.

\noindent
\textbf{2020 Mathematics Subject Classification:}
Primary 51E20; Secondary 05B25, 15A03.

\section{Introduction and preliminaries}

A basic covering problem in finite geometry asks how efficiently a finite
vector space can be covered by proper subspaces. Over an infinite field,
a finite-dimensional vector space cannot be written as a finite union of
proper subspaces. Over finite fields, however, this statement fails in
the strongest possible way: if \(V\) is an \(n\)-dimensional vector space
over \(\mathbb F_q\), with \(n\ge 2\), then \(V\) can be covered by
exactly \(q+1\) proper subspaces, and no fewer.

This phenomenon is classical. Jamison studied coverings of finite fields
by cosets of subspaces \cite{Jamison}, and Brouwer and Schrijver studied
related blocking problems in affine spaces \cite{BrouwerSchrijver}.
Khare treated vector spaces as unions of proper subspaces in broader
generality, including fixed-codimension variants \cite{Khare}. Clark
later computed covering numbers and irredundant covering numbers in
linear algebra \cite{Clark}. Covering problems for modules and rings have
also been studied; see, for example, \cite{KhareTikaradze,SwartzWerner}.

The present paper focuses on a sharper extremal question. Rather than
asking only how many proper subspaces are needed to cover \(V\), we ask:
if \(m\) proper subspaces are allowed, how many vectors can they cover?
For \(1\le m\le q+1\), we determine the exact maximum and classify the
extremal configurations.

Our first main theorem is the following.

\begin{theorem}\label{thm:intro-max-union}
Let \(V\) be an \(n\)-dimensional vector space over \(\mathbb F_q\),
where \(n\ge 2\). If \(1\le m\le q+1\), then
\[
\max_{W_1,\ldots,W_m<V}
\left|W_1\cup\cdots\cup W_m\right|
=
q^{n-2}\bigl(1+m(q-1)\bigr).
\]
For \(m=1\), equality holds exactly when \(W_1\) is a hyperplane.
For \(m\ge 2\), equality holds exactly when
\(W_1,\ldots,W_m\) are distinct hyperplanes containing a common
codimension-two subspace of \(V\).
\end{theorem}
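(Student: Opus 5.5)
The plan is to reduce to hyperplanes, bound the union by a greedy count anchored at one hyperplane, and then read off the equality cases from where that count can be tight.

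\textbf{Reduction to hyperplanes.} Every proper subspace \(W_i<V\) lies in some hyperplane \(H_i\), and replacing each \(W_i\) by \(H_i\) can only enlarge the union. So it suffices to bound \(|H_1\cup\cdots\cup H_m|\) for hyperplanes. If the \(H_i\) are not distinct, the union is a union of \(k<m\) distinct hyperplanes. Once the bound is proved for distinct hyperplanes, this case gives at most \(q^{n-2}(1+k(q-1))\), which is strictly smaller than the claimed value since \(q\ge 2\). We may therefore assume the \(H_i\) are distinct. The case \(m=1\) is immediate: \(|W_1|\le q^{n-1}\), with equality exactly when \(W_1\) is a hyperplane.

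\textbf{Upper bound.} Write the union as a disjoint union:
\[
|H_1\cup\cdots\cup H_m| = |H_1| + \sum_{i=2}^{m}\bigl|H_i\setminus (H_1\cup\cdots\cup H_{i-1})\bigr|
\le q^{n-1} + \sum_{i=2}^m |H_i\setminus H_1|.
\]
Since \(H_i\ne H_1\), the intersection \(H_i\cap H_1\) has dimension \(n-2\), so \(|H_i\setminus H_1| = q^{n-1}-q^{n-2}\). This gives
\[
|H_1\cup\cdots\cup H_m| \le q^{n-1}+(m-1)(q^{n-1}-q^{n-2}) = q^{n-2}\bigl(1+m(q-1)\bigr).
\]
For attainment, fix a codimension-two subspace \(U\). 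The hyperplanes containing \(U\) correspond to the \(q+1\) points of the projective line \(V/U\), so any \(m\le q+1\) of them can be chosen. Any two of them meet exactly in \(U\), so every inequality above is an equality and the bound is achieved.

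\textbf{Equality cases.} This is the step requiring the most care. Equality for distinct hyperplanes forces \(H_i\setminus(H_1\cup\cdots\cup H_{i-1}) = H_i\setminus H_1\) for every \(i\ge2\), that is, \(H_i\cap H_j\subseteq H_1\) for all \(j<i\). Then \(H_i\cap H_j\subseteq H_1\cap H_i\), and both sides have dimension \(n-2\), so they are equal. Hence all pairwise intersections coincide with \(U:=H_1\cap H_2\), and every \(H_i\) contains \(U\).

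It remains to go back from \(W_i\) to \(H_i\). Suppose the original \(W_i\) attain equality. Then the \(H_i\) also attain equality, so they are distinct and all contain \(U\). In this configuration, \(H_i\setminus U\) is disjoint from every other \(H_j\), so those vectors are covered only by \(W_i\). Equality therefore forces \(H_i\setminus U\subseteq W_i\). But \(H_i\setminus U\), the complement of a proper subspace of \(H_i\), spans \(H_i\), so \(W_i=H_i\). This yields exactly the stated characterization for \(m\ge2\). The converse, that any such configuration attains the bound, was shown in the attainment step.
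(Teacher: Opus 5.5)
Your proposal is correct and follows essentially the same route as the paper. Both arguments enlarge each \(W_i\) to a hyperplane \(H_i\), bound the union by a greedy count anchored at \(H_1\), and extract the common codimension-two subspace from tightness via the dimension comparison \(H_i\cap H_j\subseteq H_1\cap H_i\). Both then recover \(W_i=H_i\) from the fact that \(H_i\setminus U\) spans \(H_i\). Your separate treatment of repeated hyperplanes, which reduces to \(k<m\) distinct ones with a strictly smaller bound, is a slightly more explicit version of the paper's remark that a repeated hyperplane contributes less.
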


Thus the usual covering theorem is the endpoint \(m=q+1\) of a sharper
maximum-union theorem. The extremal objects are pencils of hyperplanes:
choose a codimension-two subspace \(U\), and take hyperplanes containing
\(U\).

Our second main object is a basis-blocking family.

\begin{definition}
A family of proper subspaces
\[
\mathcal W=\{W_1,\ldots,W_m\}
\]
of \(V\) is called \emph{basis-blocking} if every basis of \(V\)
contains at least one vector belonging to
\[
W_1\cup\cdots\cup W_m.
\]
Equivalently, \(V\setminus(W_1\cup\cdots\cup W_m)\) contains no basis of
\(V\).
\end{definition}

We determine the minimum size and classify all extremal basis-blocking
families.

\begin{theorem}\label{thm:intro-basis-blocking}
Let \(V\) be an \(n\)-dimensional vector space over \(\mathbb F_q\),
where \(n\ge 2\). The least number of proper subspaces whose union meets
every basis of \(V\) is \(q\).

Moreover, the basis-blocking families of size \(q\) are precisely the
following families: choose a codimension-two subspace \(U\le V\), take
the \(q+1\) hyperplanes containing \(U\), and omit one of them. The
remaining \(q\) hyperplanes form a minimum basis-blocking family.
\end{theorem}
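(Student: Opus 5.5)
The plan is to reduce basis-blocking to covering and then apply Theorem~\ref{thm:intro-max-union} with \(m=q+1\). The key observation concerns a family \(\mathcal W=\{W_1,\ldots,W_m\}\) and its complement \(S=V\setminus(W_1\cup\cdots\cup W_m)\). \emph{\(S\) contains a basis of \(V\) if and only if \(\spanop(S)=V\).} Indeed, any spanning set contains a basis. Conversely, if \(S\) contains a basis, it spans \(V\). So \(\mathcal W\) is basis-blocking if and only if \(\spanop(S)\) is a proper subspace. Every proper subspace lies in some hyperplane, so this happens if and only if there is a hyperplane \(H\) with \(S\subseteq H\), i.e.
\[
V = H\cup W_1\cup\cdots\cup W_m .
\]
If \(S=\emptyset\), any hyperplane \(H\) will do.

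For the lower bound, the display shows that \(V\) is covered by the \(m+1\) proper subspaces \(H,W_1,\ldots,W_m\). Since fewer than \(q+1\) proper subspaces cannot cover \(V\), we get \(m+1\ge q+1\), i.e.\ \(m\ge q\). The same conclusion follows from Theorem~\ref{thm:intro-max-union} itself, since for \(m+1\le q\) the union has size at most \(q^{n-2}(1+q(q-1))<q^n\).

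For existence and the converse direction of the classification, fix a codimension-two subspace \(U\). Let \(H_0,H_1,\ldots,H_q\) be the \(q+1\) hyperplanes containing \(U\), and take \(\mathcal W=\{H_1,\ldots,H_q\}\). These hyperplanes form a pencil, so every vector of \(V\) lies in some \(H_i\). Equivalently, one can use the count \(q^{n-2}(1+(q+1)(q-1))=q^n\) from Theorem~\ref{thm:intro-max-union}. Hence the complement satisfies \(S\subseteq H_0\setminus U\subseteq H_0\). By the key observation, \(\mathcal W\) is basis-blocking. This construction works for all \(n\ge 2\), including \(n=2\), where \(U=0\).

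For the forward classification, let \(\mathcal W\) be basis-blocking with \(|\mathcal W|=q\), and choose \(H\) as above. Then \(H,W_1,\ldots,W_q\) are \(q+1\) proper subspaces whose union has size \(q^n=q^{n-2}(1+(q+1)(q-1))\). So they attain the maximum in Theorem~\ref{thm:intro-max-union} with \(m=q+1\ge 2\). The equality case of that theorem then says they are \(q+1\) distinct hyperplanes containing a common codimension-two subspace \(U\). A pencil through \(U\) contains exactly \(q+1\) hyperplanes, so these are all of them. Hence \(W_1,\ldots,W_q\) are the hyperplanes through \(U\) other than \(H\), as claimed. The only delicate step is the reduction in the first paragraph, in particular the choice of \(H\) when \(S\) is empty or spans a smaller subspace. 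After that, the classification follows directly from the equality case of Theorem~\ref{thm:intro-max-union}.
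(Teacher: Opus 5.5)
Your proof is correct, and for the construction and the classification it is the paper's argument. The paper likewise adjoins a hyperplane \(H_0\supseteq\spanop(X)\) to obtain a cover \(V=H_0\cup W_1\cup\cdots\cup W_q\). It then applies the classification of minimum covers (Corollary~\ref{cor:minimum-covers}, i.e.\ the equality case of Theorem~\ref{thm:max-union} at \(m=q+1\)). You omit the paper's separate appeal to Lemma~\ref{lem:lower-bound} to show the \(q+1\) subspaces are distinct, which is fine because the equality case already forces distinctness.

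The real difference is the lower bound \(m\ge q\). The paper proves it greedily. Given independent \(v_1,\ldots,v_r\) outside the union, it applies Lemma~\ref{lem:lower-bound} to the at most \(q\) proper subspaces \(\spanop(v_1,\ldots,v_r),W_1,\ldots,W_m\) to find \(v_{r+1}\), building an avoiding basis one vector at a time. You instead get the bound from the same reduction you use for the classification: \(\{W_1,\ldots,W_m\}\) is basis-blocking if and only if \(V=H\cup W_1\cup\cdots\cup W_m\) for some hyperplane \(H\). A single application of the covering lemma then gives \(m+1\ge q+1\).

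Your route is shorter. It also explains at once why the basis-blocking number is exactly one less than the covering number, with both the bound and the extremal families inherited from the covering results. The paper's route is explicitly constructive and yields the avoiding basis of Corollary~\ref{cor:basis-avoidance} directly.
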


Thus the covering number and the basis-blocking number differ by one:
\[
\text{to cover all vectors, one needs }q+1\text{ proper subspaces,}
\]
whereas
\[
\text{to block all bases, one needs }q\text{ proper subspaces.}
\]
Both extremal problems are governed by the same finite-geometric object:
a pencil of hyperplanes through a codimension-two subspace.

Throughout the paper, \(q\) denotes a prime power and \(\mathbb F_q\)
denotes the finite field with \(q\) elements. Unless otherwise stated,
\(V\) denotes an \(n\)-dimensional vector space over \(\mathbb F_q\), with
\(n\ge 2\).

We use the following terminology. A subspace \(W\le V\) is \emph{proper}
if \(W\ne V\). A family \(W_1,\ldots,W_m\) of proper subspaces is a
\emph{cover} of \(V\) if
\[
V=W_1\cup\cdots\cup W_m.
\]
Such a cover is \emph{irredundant} if no member can be removed while
still covering \(V\). The \emph{codimension} of \(W\) is
\[
\codim W=\dim V-\dim W.
\]
A \emph{hyperplane} is a subspace of codimension one. A \emph{pencil of
hyperplanes} means the family of all hyperplanes of \(V\) containing a
fixed codimension-two subspace \(U\le V\). If \(V=\mathbb F_q^n\) and
\(H=\ker \ell\), where \(0\ne \ell\in V^\ast\), then the
\emph{projective class} of \(\ell\), denoted \([\ell]\), is the
one-dimensional subspace of \(V^\ast\) spanned by \(\ell\). A
\emph{projective line} in \(\mathbb P(V^\ast)\) is the set of all
one-dimensional subspaces of a fixed two-dimensional subspace of
\(V^\ast\). The rank of a finite set of linear functionals is the
dimension of their span in \(V^\ast\).

The paper is organized as follows. Section 2 proves the maximum-union
theorem and its consequences for minimum covers. Section 3 treats
basis-blocking families. Section 4 gives the affine analogue, the
recognition criterion, examples, and open problems.

\section{Maximum unions and minimum covers}

We begin with the standard lower bound for covering \(V\) by proper
subspaces.

\begin{lemma}\label{lem:lower-bound}
Let \(V\) be a vector space over \(\mathbb F_q\), with \(\dim V\ge 2\).
If
\[
V=W_1\cup\cdots\cup W_m
\]
is a cover by proper subspaces, then
\[
m\ge q+1.
\]
\end{lemma}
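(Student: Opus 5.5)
The plan is a counting argument on nonzero vectors. First I reduce to the case where every \(W_i\) is a hyperplane: each proper subspace lies in some hyperplane, and enlarging the \(W_i\) keeps the cover a cover without changing \(m\). So assume \(V=H_1\cup\cdots\cup H_m\) with each \(H_i\) a hyperplane. I may also assume \(m\ge 2\) and that the \(H_i\) are distinct, since a single hyperplane misses vectors and discarding repetitions only lowers \(m\).

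I first try counting with a fixed member. The \(q^n-q^{n-1}\) vectors outside \(H_1\) must be covered by \(H_2,\dots,H_m\). For \(i\ge2\), \(H_i\cap H_1\) has codimension two, so
\[
|H_i\setminus H_1| = q^{n-1}-q^{n-2}.
\]
Hence
\[
(m-1)\,(q^{n-1}-q^{n-2}) \;\ge\; q^n-q^{n-1}=q\,(q^{n-1}-q^{n-2}),
\]
which gives \(m-1\ge q\), i.e.\ \(m\ge q+1\). This already finishes the proof, and it uses only the dimension formula \(\dim(H_i\cap H_1)=n-2\) for distinct hyperplanes, which is where \(n\ge 2\) enters.

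For robustness I would also note the symmetric global count. The set \(V\setminus\{0\}\) has \(q^n-1\) elements, and each \(H_i\setminus\{0\}\) has \(q^{n-1}-1\). So \(m(q^{n-1}-1)\ge q^n-1\), giving \(m> q\). Since \(m\) is an integer, this also yields \(m\ge q+1\).

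The only delicate point is the reduction to distinct hyperplanes, in particular the claim that a proper subspace lies in a hyperplane. That claim holds by extending a basis of \(W_i\) and taking the kernel of a suitable functional. Beyond that, there is no real obstacle.
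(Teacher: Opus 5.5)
Your argument is correct, but it takes a different route from the paper. The paper passes to an irredundant cover, picks $a\in W_1$ and $b\in W_2$ each lying in no other member, and shows that the $q-1$ vectors $a+tb$, $t\in\mathbb F_q^\times$, avoid $W_1$ and $W_2$ and must lie in pairwise different members among $W_3,\ldots,W_m$. This gives $m-2\ge q-1$.

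You instead enlarge each $W_i$ to a hyperplane and count the vectors outside $H_1$. Your inequality $(m-1)(q^{n-1}-q^{n-2})\ge q^n-q^{n-1}$ is precisely the counting step in the paper's proof of Theorem~\ref{thm:max-union}, which does not rely on this lemma. So you are in effect deriving the lemma as the case of that bound in which the union is all of $V$. Your global count of nonzero vectors also suffices on its own.

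Your route is shorter and ties the lemma directly to the maximum-union bound, but it depends on cardinalities. The paper's argument uses neither counting nor the reduction to hyperplanes. It therefore works verbatim over any field $F$, forcing at least $|F|+1$ members and hence no finite cover when $F$ is infinite, and in any dimension.

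Under the paper's standing convention that $V$ is $n$-dimensional, this costs you nothing. If one reads the lemma as allowing infinite-dimensional $V$, you would first restrict to a finite-dimensional subspace $P$ spanned by vectors $v_i\notin W_i$, adding one more vector if needed to get $\dim P\ge 2$. The subspaces $P\cap W_i$ then form a cover of $P$ by proper subspaces, and your count applies to $P$.
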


\begin{proof}
After deleting redundant members, assume that the cover is irredundant.
Then \(m\ge 2\). Since the cover is irredundant, we may choose
\[
a\in W_1\setminus\bigcup_{i=2}^m W_i
\]
and
\[
b\in W_2\setminus
\bigcup_{\substack{1\le i\le m\\ i\ne 2}} W_i .
\]
For each \(t\in\mathbb F_q^\times\), consider the vector
\[
a+tb.
\]
It lies in neither \(W_1\) nor \(W_2\). Indeed, if \(a+tb\in W_1\), then
since \(a\in W_1\), we get \(tb\in W_1\), hence \(b\in W_1\), a
contradiction. Similarly, \(a+tb\notin W_2\).

Thus every vector \(a+tb\), with \(t\in\mathbb F_q^\times\), must lie in
one of \(W_3,\ldots,W_m\). We claim that each \(W_i\), \(i\ge 3\), can
contain at most one of these vectors. If
\[
a+t_1b,\ a+t_2b\in W_i
\]
with \(t_1\ne t_2\), then subtracting gives
\[
(t_1-t_2)b\in W_i.
\]
Hence \(b\in W_i\). Then
\[
a=(a+t_1b)-t_1b\in W_i,
\]
contradicting the choice of \(a\). Therefore the \(q-1\) vectors
\(a+tb\), \(t\in\mathbb F_q^\times\), must be assigned to distinct
members among \(W_3,\ldots,W_m\). Hence
\[
m-2\ge q-1,
\]
and therefore \(m\ge q+1\).
\end{proof}

The following construction attains the bound and will also produce the
extremal examples for the maximum-union theorem.

\begin{proposition}\label{prop:pencil}
Let \(U\le V\) have codimension two. Then the hyperplanes of \(V\)
containing \(U\) form a family of \(q+1\) hyperplanes whose union is
\(V\). Moreover, if \(m\) of these hyperplanes are chosen, where
\(1\le m\le q+1\), then their union has size
\[
q^{n-2}\bigl(1+m(q-1)\bigr).
\]
\end{proposition}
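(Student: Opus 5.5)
The plan is to pass to the quotient \(V/U\), which is two-dimensional over \(\mathbb F_q\). Hyperplanes \(H\) of \(V\) containing \(U\) correspond bijectively, via \(H\mapsto H/U\), to one-dimensional subspaces of \(V/U\cong\mathbb F_q^2\). The number of such lines is \((q^2-1)/(q-1)=q+1\), so the pencil through \(U\) consists of exactly \(q+1\) hyperplanes \(H_0,\ldots,H_q\).

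For the covering statement, take any \(v\in V\).
\begin{itemize}
\item If \(v\in U\), then \(v\) lies in every \(H_i\).
\item If \(v\notin U\), then \(v+U\) is a nonzero vector of \(V/U\). It spans a line \(L\), and its preimage \(\pi^{-1}(L)\) is a hyperplane containing both \(U\) and \(v\).
\end{itemize}
Here \(\pi:V\to V/U\) is the quotient map. Hence \(H_0\cup\cdots\cup H_q=V\).

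For the size count, the key structural fact is that any two distinct hyperplanes \(H_i\ne H_j\) in the pencil satisfy \(H_i\cap H_j=U\). Indeed, their images are distinct lines in \(V/U\), which meet only in \(0\), so the intersection \(H_i\cap H_j\) is exactly \(\pi^{-1}(0)=U\). Consequently the sets \(H_i\setminus U\) are pairwise disjoint. Each has size
\[
|H_i|-|U|=q^{n-1}-q^{n-2}=q^{n-2}(q-1).
\]
So for any \(m\) chosen hyperplanes the union is the disjoint union of \(U\) with the sets \(H_i\setminus U\), giving
\[
\Bigl|\bigcup_{i=1}^m H_i\Bigr| = q^{n-2}+m\,q^{n-2}(q-1)=q^{n-2}\bigl(1+m(q-1)\bigr).
\]
As a consistency check, \(m=q+1\) gives \(q^{n-2}\cdot q^2=q^n=|V|\), which recovers the covering statement.

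There is no real obstacle here. The only step needing care is justifying \(H_i\cap H_j=U\). A dimension count also gives it: \(\dim(H_i+H_j)=n\) forces \(\dim(H_i\cap H_j)=n-2\), and \(U\) is contained in this intersection, so the two coincide.
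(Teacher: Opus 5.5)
Your proof is correct and follows essentially the same route as the paper. Both pass to the two-dimensional quotient \(V/U\), identify the pencil with the \(q+1\) lines of \(V/U\), and count the union as \(U\) together with the pairwise disjoint sets \(H_i\setminus U\); you simply supply a bit more detail, namely the count \((q^2-1)/(q-1)\) and an explicit justification of \(H_i\cap H_j=U\).
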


\begin{proof}
Since \(\dim(V/U)=2\), the hyperplanes of \(V\) containing \(U\)
correspond exactly to the one-dimensional subspaces of \(V/U\). There
are \(q+1\) such subspaces, and they cover \(V/U\). Taking inverse images
under the quotient map \(V\to V/U\), the \(q+1\) corresponding
hyperplanes cover \(V\).

Now choose \(m\) of these hyperplanes. Any two of them meet exactly in
\(U\). Hence their parts outside \(U\) are pairwise disjoint. Therefore
their union has size
\[
|U|+m(q^{n-1}-|U|)
=
q^{n-2}+m(q^{n-1}-q^{n-2})
=
q^{n-2}\bigl(1+m(q-1)\bigr).
\]
\end{proof}

We now prove the main extremal result.

\begin{theorem}\label{thm:max-union}
Let \(V\) be an \(n\)-dimensional vector space over \(\mathbb F_q\),
where \(n\ge 2\). If
\[
W_1,\ldots,W_m
\]
are proper subspaces of \(V\) and \(1\le m\le q+1\), then
\[
\left|W_1\cup\cdots\cup W_m\right|
\le
q^{n-2}\bigl(1+m(q-1)\bigr).
\]
The bound is sharp.

For \(m=1\), equality holds if and only if \(W_1\) is a hyperplane. For
\(m\ge 2\), equality holds if and only if \(W_1,\ldots,W_m\) are
distinct hyperplanes containing a common codimension-two subspace of
\(V\).
\end{theorem}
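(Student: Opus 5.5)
We reduce first to hyperplanes. Every proper subspace lies in some hyperplane, and enlarging each \(W_i\) to a hyperplane \(H_i\supseteq W_i\) can only increase the union. So it suffices to prove the bound for hyperplanes \(H_1,\ldots,H_m\); repetitions are allowed, since they only shrink the union. Sharpness is Proposition~\ref{prop:pencil}. For \(m=1\) the claim is immediate: \(|W_1|\le q^{n-1}\), with equality exactly for hyperplanes.

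The main step is a count by induction on \(m\), adding hyperplanes one at a time. The natural tool is a bound on
\[
|H_k\setminus (H_1\cup\cdots\cup H_{k-1})|\le q^{n-1}-q^{n-2},
\]
which would hold whenever \(H_k\) is distinct from \(H_1,\ldots,H_{k-1}\), because \(H_k\cap H_1\) alone already has size \(q^{n-2}\). If \(H_k\) equals an earlier hyperplane, the new contribution is \(0\). Summing gives
\[
|H_1\cup\cdots\cup H_m|\le q^{n-1}+(m-1)(q^{n-1}-q^{n-2})=q^{n-2}\bigl(1+m(q-1)\bigr).
\]
The inequality needs no hypothesis on \(m\). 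The restriction \(m\le q+1\) enters only because for larger \(m\) the right side exceeds \(q^n\) and the bound stops being sharp.

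The expected obstacle is the equality case, and we would settle it as follows. Equality forces each \(W_i\) to be a hyperplane, since otherwise replacing \(W_i\) by a strictly larger hyperplane containing it would either strictly increase the union, contradicting the bound, or leave it unchanged. That subtlety must be handled carefully. The cleaner route is to argue directly: if \(W_m\) is not a hyperplane, then \(|W_m|\le q^{n-2}\), and the union is at most the bound for \(m-1\) plus \(q^{n-2}\). This is strictly less than the bound for \(m\) because \(q^{n-2}<q^{n-1}-q^{n-2}\) when \(q\ge 3\). When \(q=2\) these quantities are equal, so we refine: the new part \(W_m\setminus(\cdots)\) excludes \(0\), which gives at most \(q^{n-2}-1\). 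The same argument, applied to each index in turn, handles every \(i\).

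Equality also forces the hyperplanes to be distinct, and forces each step to be tight, so \(H_k\cap(H_1\cup\cdots\cup H_{k-1})=H_k\cap H_1\) for every \(k\). Equivalently, \(H_k\cap H_j\subseteq H_k\cap H_1\) for all \(j<k\). Both sides are codimension-two subspaces, so \(H_k\cap H_j=H_k\cap H_1\). Write \(U=H_1\cap H_2\), which has codimension two. Then \(H_k\cap H_1=H_k\cap H_2\) for each \(k\ge 3\), so \(H_k\cap H_1\subseteq H_1\cap H_2=U\). Equality of dimensions gives \(H_k\cap H_1=U\), and hence \(U\subseteq H_k\). All the hyperplanes therefore contain \(U\). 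The converse is Proposition~\ref{prop:pencil}.
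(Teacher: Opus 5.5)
Your proof is correct. For the inequality and for the pencil structure it follows the paper: enlarge each $W_i$ to a hyperplane, bound each new contribution by $|H_k\setminus H_1|=q^{n-1}-q^{n-2}$, then use tightness to get $H_k\cap H_j\subseteq H_k\cap H_1$, and hence $U=H_1\cap H_2\subseteq H_k$.

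The one real difference is how you show that the original $W_i$ are themselves hyperplanes.

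\begin{itemize}
\item The paper does this last. It works throughout with the enlarged $H_i$ and first proves they form a pencil through $U$. It then uses two facts: the union of the $W_i$ equals the union of the $H_i$, and the sets $H_i\setminus U$ are pairwise disjoint. Together these force $H_i\setminus U\subseteq W_i$, and this set spans $H_i$.
\item You do it first, by a pure count. A non-hyperplane $W_i$ has at most $q^{n-2}$ elements. Since $0$ is already covered by the other members, it adds at most $q^{n-2}-1$ new points to their union, which is strictly less than $q^{n-2}(q-1)$.
\end{itemize}

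Your route settles hyperplane-ness before any geometry, so the rest of the argument can run with $H_i=W_i$. The cost is invoking the bound for $m-1$ subspaces, which is harmless since the inequality holds for every $m\ge 1$. The paper's route gets $W_i=H_i$ almost for free once the pencil is known, using only disjointness and a spanning fact. Also, your refinement by subtracting $1$ already works for every $q\ge 2$, so treating $q\ge 3$ and $q=2$ separately is unnecessary.
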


\begin{proof}
For each \(i\), choose a hyperplane \(H_i\) containing \(W_i\). Then
\[
W_1\cup\cdots\cup W_m
\subseteq
H_1\cup\cdots\cup H_m.
\]
Thus it suffices to prove the upper bound for hyperplanes.

The first hyperplane contributes \(q^{n-1}\) points. For each \(j\ge 2\),
the new contribution of \(H_j\) is at most
\[
|H_j|-|H_j\cap H_1|.
\]
If \(H_j\ne H_1\), then \(H_j\cap H_1\) has codimension two in \(V\),
and hence has size \(q^{n-2}\). If \(H_j=H_1\), the new contribution is
even smaller. Therefore each additional hyperplane contributes at most
\[
q^{n-1}-q^{n-2}
\]
new points. Hence
\[
\left|H_1\cup\cdots\cup H_m\right|
\le
q^{n-1}+(m-1)(q^{n-1}-q^{n-2})
=
q^{n-2}\bigl(1+m(q-1)\bigr).
\]
The same upper bound holds for \(W_1,\ldots,W_m\).

Sharpness follows from Proposition \ref{prop:pencil}: choose any \(m\)
hyperplanes from the pencil of hyperplanes containing a fixed
codimension-two subspace.

It remains to analyze equality. The case \(m=1\) is immediate: a proper
subspace has at most \(q^{n-1}\) elements, with equality precisely when
it is a hyperplane.

Assume now that \(m\ge 2\), and suppose equality holds. Replacing
\(W_i\) by \(H_i\) can only increase the union, so equality forces
\[
\left|H_1\cup\cdots\cup H_m\right|
=
q^{n-2}\bigl(1+m(q-1)\bigr).
\]
Thus equality holds at every step of the counting argument above.

In particular, the hyperplanes \(H_1,\ldots,H_m\) are distinct, and for
every \(j\ge 2\),
\[
H_j\cap(H_1\cup\cdots\cup H_{j-1})
=
H_j\cap H_1.
\]
Fix \(j\ge 3\). From this equality we get
\[
H_j\cap H_2\subseteq H_j\cap H_1.
\]
Now \(H_j\cap H_1\) and \(H_j\cap H_2\) are both intersections of two
distinct hyperplanes, so each has codimension two in \(V\), equivalently
dimension \(n-2\). Since one is contained in the other and both have the
same dimension, they are equal:
\[
H_j\cap H_2=H_j\cap H_1.
\]
Both therefore coincide with \(H_1\cap H_2\), which also has dimension
\(n-2\). Writing
\[
U=H_1\cap H_2,
\]
we conclude that \(U\subseteq H_j\) for every \(j\). Thus
\(H_1,\ldots,H_m\) are distinct hyperplanes through the common
codimension-two subspace \(U\).

Finally, since
\[
W_1\cup\cdots\cup W_m=H_1\cup\cdots\cup H_m,
\]
we prove that \(W_i=H_i\) for every \(i\). In a pencil of hyperplanes
through \(U\), the sets \(H_i\setminus U\) are pairwise disjoint. Hence a
point of \(H_i\setminus U\) cannot belong to \(H_j\), and therefore
cannot belong to \(W_j\), for \(j\ne i\). Since the two unions are equal,
every point of \(H_i\setminus U\) must lie in \(W_i\). Therefore
\[
H_i\setminus U\subseteq W_i.
\]
But \(H_i\setminus U\) spans \(H_i\). Hence \(W_i=H_i\). This proves the
equality characterization.

Conversely, if \(W_1,\ldots,W_m\) are distinct hyperplanes containing a
common codimension-two subspace, Proposition \ref{prop:pencil} shows that
equality holds.
\end{proof}

The endpoint \(m=q+1\) gives the structure of minimum covers.

\begin{corollary}\label{cor:minimum-covers}
Let \(V\) be an \(n\)-dimensional vector space over \(\mathbb F_q\),
where \(n\ge 2\). Suppose
\[
V=W_1\cup\cdots\cup W_{q+1},
\]
where each \(W_i\) is a proper subspace of \(V\). Then
\(W_1,\ldots,W_{q+1}\) are exactly the \(q+1\) hyperplanes containing a
fixed codimension-two subspace of \(V\).

Conversely, for any codimension-two subspace \(U\le V\), the \(q+1\)
hyperplanes containing \(U\) form a minimum cover of \(V\) by proper
subspaces.
\end{corollary}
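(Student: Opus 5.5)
The plan is to read Corollary~\ref{cor:minimum-covers} off as the endpoint \(m=q+1\) of Theorem~\ref{thm:max-union}. Suppose \(V=W_1\cup\cdots\cup W_{q+1}\) with each \(W_i\) proper. Then the union has size \(q^n\). At \(m=q+1\) the bound of Theorem~\ref{thm:max-union} equals
\[
q^{n-2}\bigl(1+(q+1)(q-1)\bigr)=q^{n-2}\cdot q^2=q^n .
\]
So the union attains the maximum, and we are in the equality case. Since \(q+1\ge 3>2\), the \(m\ge 2\) clause of the equality characterization applies. It says that \(W_1,\ldots,W_{q+1}\) are distinct hyperplanes all containing a common codimension-two subspace \(U\).

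The next step is to check that these are exactly the hyperplanes through \(U\). Hyperplanes containing \(U\) correspond bijectively to the one-dimensional subspaces of the two-dimensional space \(V/U\), and there are \(q+1\) of these. We have \(q+1\) distinct hyperplanes through \(U\), so they exhaust the pencil. Moreover, \(U\) is determined as the intersection of any two of them.

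For the converse, Proposition~\ref{prop:pencil} shows that the \(q+1\) hyperplanes containing \(U\) cover \(V\). Lemma~\ref{lem:lower-bound} shows that no cover by proper subspaces uses fewer than \(q+1\) members. Together these give minimality.

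I do not expect a real obstacle, since everything follows from the equality analysis already proved. The only point needing care is that the hypothesis allows the \(W_i\) to be arbitrary proper subspaces, possibly with repetitions. This is handled because the equality case of Theorem~\ref{thm:max-union} already forces the \(W_i\) to be distinct hyperplanes.
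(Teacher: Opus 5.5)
Your proposal is correct and matches the paper's proof: at \(m=q+1\) the bound of Theorem~\ref{thm:max-union} equals \(q^n\), so the equality classification applies, and the converse follows from Proposition~\ref{prop:pencil} together with Lemma~\ref{lem:lower-bound}. Your added observation, that \(q+1\) distinct hyperplanes through \(U\) must be the whole pencil, spells out a counting step the paper leaves implicit.
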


\begin{proof}
Since the union is all of \(V\), its size is \(q^n\). For \(m=q+1\), the
upper bound in Theorem \ref{thm:max-union} becomes
\[
q^{n-2}\bigl(1+(q+1)(q-1)\bigr)=q^{n-2}q^2=q^n.
\]
Thus equality holds in Theorem \ref{thm:max-union}, and the equality
classification gives the result. The converse follows from Proposition
\ref{prop:pencil} and Lemma \ref{lem:lower-bound}.
\end{proof}

The case \(m=q\) gives a sharp defect estimate.

\begin{corollary}\label{cor:defect}
Let \(V\) be an \(n\)-dimensional vector space over \(\mathbb F_q\),
where \(n\ge 2\). If
\[
W_1,\ldots,W_q
\]
are proper subspaces of \(V\), then
\[
\left|V\setminus(W_1\cup\cdots\cup W_q)\right|
\ge
(q-1)q^{n-2}.
\]
Equality holds if and only if \(W_1,\ldots,W_q\) are \(q\) hyperplanes
from a pencil of \(q+1\) hyperplanes through a common codimension-two
subspace.
\end{corollary}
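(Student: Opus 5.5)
This is the case \(m=q\) of Theorem \ref{thm:max-union}, rewritten as a statement about complements. The plan is to apply the theorem directly and then translate both the bound and the equality clause. Since \(q\ge 2\), we have \(1\le q\le q+1\), so Theorem \ref{thm:max-union} applies to \(W_1,\ldots,W_q\). It gives
\[
\left|W_1\cup\cdots\cup W_q\right|\le q^{n-2}\bigl(1+q(q-1)\bigr)=q^{n-2}(q^2-q+1).
\]
Subtracting from \(|V|=q^n\) yields
\[
\left|V\setminus(W_1\cup\cdots\cup W_q)\right|\ge q^n-q^n+q^{n-1}-q^{n-2}=(q-1)q^{n-2},
\]
which is the claimed inequality.

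For the equality case, note that equality in the defect bound holds exactly when equality holds in Theorem \ref{thm:max-union} with \(m=q\). Because \(q\ge 2\), we are in the regime \(m\ge 2\). The classification there therefore says equality holds if and only if \(W_1,\ldots,W_q\) are distinct hyperplanes containing a common codimension-two subspace \(U\).

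It remains to match this with the wording of the corollary. By Proposition \ref{prop:pencil}, the hyperplanes containing \(U\) number exactly \(q+1\). So \(q\) distinct such hyperplanes are precisely \(q\) members of the pencil through \(U\), that is, the pencil with one hyperplane omitted. Conversely, any \(q\) hyperplanes chosen from such a pencil are distinct and contain \(U\), so equality holds. The only small point needing care is that the phrase "\(q\) hyperplanes from a pencil" means \(q\) distinct members. Under that reading the argument has no genuine obstacle.
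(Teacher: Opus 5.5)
Your proposal is correct and follows the paper's proof: you specialize Theorem \ref{thm:max-union} to \(m=q\), subtract from \(|V|=q^n\), and read off the equality case from the \(m\ge 2\) classification. Your added remarks, that \(q\ge 2\) places you in the \(m\ge 2\) regime and that \(q\) distinct hyperplanes through \(U\) are exactly a pencil with one member omitted, only make explicit what the paper leaves implicit.
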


\begin{proof}
By Theorem \ref{thm:max-union},
\[
|W_1\cup\cdots\cup W_q|
\le
q^{n-2}\bigl(1+q(q-1)\bigr).
\]
Therefore
\[
\left|V\setminus(W_1\cup\cdots\cup W_q)\right|
\ge
q^n-q^{n-2}\bigl(1+q(q-1)\bigr)
=
(q-1)q^{n-2}.
\]
The equality statement follows from Theorem \ref{thm:max-union}.
\end{proof}

\begin{remark}
Corollary \ref{cor:defect} quantifies exactly how close \(q\) proper
subspaces can come to covering \(V\). They cannot cover \(V\), but the
best possible construction misses only \((q-1)q^{n-2}\) vectors. The
extremal configuration is obtained by taking all but one hyperplane from
a pencil through a codimension-two subspace.
\end{remark}

\section{Basis-blocking families}

We now turn to the weaker condition of blocking all bases.

\begin{lemma}\label{lem:span-criterion}
Let \(X\subseteq V\). Then \(X\) contains a basis of \(V\) if and only if
\[
\spanop(X)=V.
\]
Consequently, \(W_1,\ldots,W_m\) are basis-blocking if and only if
\[
\spanop\bigl(V\setminus(W_1\cup\cdots\cup W_m)\bigr)\ne V.
\]
\end{lemma}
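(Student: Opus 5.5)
The plan is to prove the two directions of the first claim separately and then deduce the "consequently" part by applying the first claim to the complement of the union.

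For the forward direction, suppose \(X\) contains a basis \(B\) of \(V\). Then
\[
V=\spanop(B)\subseteq\spanop(X)\subseteq V,
\]
so \(\spanop(X)=V\).

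For the converse, assume \(\spanop(X)=V\). Since \(V\) is finite, so is \(X\). I would extract a maximal linearly independent subset \(B\subseteq X\), which exists because \(X\) is finite; one can also build it greedily by adding elements of \(X\) that are not in the span of those already chosen. By maximality, every \(x\in X\) lies in \(\spanop(B)\): otherwise \(B\cup\{x\}\) would be independent, contradicting maximality. Hence
\[
V=\spanop(X)\subseteq\spanop(B),
\]
so \(B\) is independent and spanning, that is, a basis of \(V\) contained in \(X\). If \(X=\emptyset\), this forces \(V=0\), which is excluded here; in that case the empty basis works anyway.

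For the consequence, set
\[
X=V\setminus(W_1\cup\cdots\cup W_m).
\]
By the definition, \(W_1,\ldots,W_m\) are basis-blocking exactly when no basis of \(V\) is disjoint from the union, that is, when \(X\) contains no basis of \(V\). By the first part, this is equivalent to \(\spanop(X)\ne V\).

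There is no genuine obstacle here; the only point needing care is the existence of a maximal independent subset, which is immediate from finiteness (or from the standard basis-extraction theorem of linear algebra).
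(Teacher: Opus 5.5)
Your proof is correct and follows the same route as the paper. The forward direction is immediate, and for the converse you spell out the standard basis-extraction argument (a maximal independent subset of the finite set \(X\)) that the paper cites, while your derivation of the ``consequently'' part by applying the first claim to \(X=V\setminus(W_1\cup\cdots\cup W_m)\) is the step the paper leaves implicit.
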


\begin{proof}
If \(X\) contains a basis of \(V\), then \(X\) spans \(V\). Conversely,
if \(X\) spans \(V\), then a standard basis-extraction argument produces
a basis of \(V\) contained in \(X\).
\end{proof}

\begin{theorem}\label{thm:basis-blocking-number}
Let \(V\) be an \(n\)-dimensional vector space over \(\mathbb F_q\),
where \(n\ge 2\). The least number of proper subspaces whose union meets
every basis of \(V\) is \(q\).
\end{theorem}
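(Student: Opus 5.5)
The proof has two parts: a construction with $q$ subspaces, and a proof that $q-1$ subspaces never suffice. Both rest on Lemma \ref{lem:span-criterion}, which says a family is basis-blocking exactly when its complement fails to span $V$.

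For the upper bound, fix a codimension-two subspace $U$ and let $H_0,H_1,\ldots,H_q$ be the $q+1$ hyperplanes containing $U$, as in Proposition \ref{prop:pencil}. Take the family $H_1,\ldots,H_q$. By Proposition \ref{prop:pencil} the full pencil covers $V$, and the sets $H_i\setminus U$ are pairwise disjoint. Hence
\[
V\setminus(H_1\cup\cdots\cup H_q)=H_0\setminus U\subseteq H_0,
\]
so this complement spans at most $H_0\neq V$. By Lemma \ref{lem:span-criterion} the family is basis-blocking, which shows the minimum is at most $q$.

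For the lower bound, let $W_1,\ldots,W_m$ be proper subspaces with $m\le q-1$, and set $X=V\setminus(W_1\cup\cdots\cup W_m)$. It suffices to show $\spanop(X)=V$. Suppose instead that $\spanop(X)\subseteq H$ for some hyperplane $H$. Then every vector outside $H$ lies in some $W_i$, so
\[
V=H\cup W_1\cup\cdots\cup W_m .
\]
This is a cover of $V$ by $m+1\le q$ proper subspaces, contradicting Lemma \ref{lem:lower-bound}. Therefore $X$ spans $V$, and by Lemma \ref{lem:span-criterion} it contains a basis avoiding the union. So no family of at most $q-1$ proper subspaces is basis-blocking. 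The case $m=0$ is included: the empty family clearly fails, since $V$ itself spans.

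Combining the two parts, the least size of a basis-blocking family is exactly $q$. The only step requiring an idea is the reduction in the lower bound: failure to span means the complement sits inside a hyperplane, and adjoining that hyperplane to the family produces a genuine cover. After that, Lemma \ref{lem:lower-bound} does the work, and the rest is routine.

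The same reduction should also drive the later classification in Theorem \ref{thm:intro-basis-blocking}. If $q$ subspaces are basis-blocking, then $H,W_1,\ldots,W_q$ form a cover by $q+1$ proper subspaces. Corollary \ref{cor:minimum-covers} then forces these to be a full pencil, so the $W_i$ are the pencil with $H$ omitted.
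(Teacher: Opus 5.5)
Your proof is correct. The upper bound is the same as the paper's, but your lower bound follows a different route.

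The paper builds the avoiding basis greedily. Having chosen independent vectors $v_1,\ldots,v_r$ outside the union, it applies Lemma \ref{lem:lower-bound} to the at most $q$ proper subspaces $\spanop(v_1,\ldots,v_r),W_1,\ldots,W_m$ to obtain $v_{r+1}$. This invokes the covering lemma $n$ times and never appeals to Lemma \ref{lem:span-criterion}.

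You argue globally instead. If the complement $X$ failed to span, then a hyperplane $H\supseteq\spanop(X)$ together with the $W_i$ would cover $V$ using $m+1\le q$ proper subspaces. So a single application of Lemma \ref{lem:lower-bound} suffices, and basis extraction is outsourced to Lemma \ref{lem:span-criterion}.

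The paper's version is constructive and self-contained. Yours is shorter, and it is exactly the reduction the paper uses later for Theorem \ref{thm:basis-blocking-classification}. In fact your argument gives a clean equivalence: a family of proper subspaces is basis-blocking if and only if adjoining some hyperplane turns it into a cover of $V$. This explains directly why the basis-blocking number is the covering number minus one. It also lets the minimum size and the classification of extremal families be read off together from Lemma \ref{lem:lower-bound} and Corollary \ref{cor:minimum-covers}, as your final remark anticipates.
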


\begin{proof}
First we show that \(q-1\) proper subspaces cannot block all bases. Let
\[
W_1,\ldots,W_m
\]
be proper subspaces with \(m\le q-1\). We construct a basis avoiding all
of them.

Suppose that \(v_1,\ldots,v_r\), with \(0\le r<n\), have already been
chosen linearly independently and outside
\[
W_1\cup\cdots\cup W_m.
\]
Let
\[
U_r=\spanop(v_1,\ldots,v_r).
\]
Then \(U_r\) is a proper subspace of \(V\). The family
\[
U_r,W_1,\ldots,W_m
\]
contains at most \(q\) proper subspaces. By Lemma \ref{lem:lower-bound},
these subspaces cannot cover \(V\). Hence there exists
\[
v_{r+1}\in V\setminus(U_r\cup W_1\cup\cdots\cup W_m).
\]
Then \(v_1,\ldots,v_{r+1}\) are linearly independent, and the new vector
avoids every \(W_i\). Repeating this argument gives a basis of \(V\)
avoiding \(W_1,\ldots,W_m\). Therefore \(q-1\) proper subspaces are not
basis-blocking.

Now we show that \(q\) subspaces can block all bases. Choose a
codimension-two subspace \(U\le V\), and let
\[
H_0,H_1,\ldots,H_q
\]
be the \(q+1\) hyperplanes containing \(U\). The union of all these
hyperplanes is \(V\). Hence
\[
V\setminus(H_1\cup\cdots\cup H_q)\subseteq H_0.
\]
Since \(H_0\) is a proper subspace, the complement
\(V\setminus(H_1\cup\cdots\cup H_q)\) cannot contain a basis of \(V\).
Thus \(H_1,\ldots,H_q\) are basis-blocking. The minimum size is therefore
\(q\).
\end{proof}

We now classify the minimum basis-blocking families.

\begin{theorem}\label{thm:basis-blocking-classification}
Let \(V\) be an \(n\)-dimensional vector space over \(\mathbb F_q\),
where \(n\ge 2\). Let
\[
W_1,\ldots,W_q
\]
be proper subspaces of \(V\). Then \(W_1,\ldots,W_q\) are
basis-blocking if and only if there exists a codimension-two subspace
\(U\le V\) such that \(W_1,\ldots,W_q\) are \(q\) of the \(q+1\)
hyperplanes containing \(U\).
\end{theorem}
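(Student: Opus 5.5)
The plan is to reduce the classification to the structure of minimum covers in Corollary \ref{cor:minimum-covers}. The idea is that a basis-blocking family of size \(q\), together with one extra hyperplane, must form a cover of \(V\) by \(q+1\) proper subspaces.

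\emph{Sufficiency.} This direction is already contained in the proof of Theorem \ref{thm:basis-blocking-number}. Suppose \(W_1,\ldots,W_q\) are \(q\) of the \(q+1\) hyperplanes through a codimension-two subspace \(U\), and let \(H_0\) be the omitted one. Since the full pencil covers \(V\) by Proposition \ref{prop:pencil}, we get
\[
V\setminus(W_1\cup\cdots\cup W_q)\subseteq H_0 .
\]
So the complement spans at most \(H_0\ne V\), and Lemma \ref{lem:span-criterion} shows the family is basis-blocking.

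\emph{Necessity.} Assume \(W_1,\ldots,W_q\) are basis-blocking.
\begin{enumerate}
\item Set \(C=V\setminus(W_1\cup\cdots\cup W_q)\). By Lemma \ref{lem:span-criterion}, \(\spanop(C)\) is a proper subspace of \(V\), so we may choose a hyperplane \(H\) with \(C\subseteq \spanop(C)\subseteq H\).
\item It follows that
\[
V=W_1\cup\cdots\cup W_q\cup H,
\]
which is a cover of \(V\) by exactly \(q+1\) proper subspaces.
\item Corollary \ref{cor:minimum-covers} now says these \(q+1\) subspaces are exactly the \(q+1\) hyperplanes containing some codimension-two subspace \(U\).
\item Therefore each \(W_i\) is a hyperplane through \(U\). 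The \(W_i\) are pairwise distinct and distinct from \(H\), so \(W_1,\ldots,W_q\) are \(q\) of the \(q+1\) hyperplanes of the pencil, with \(H\) the omitted one.
\end{enumerate}

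The only point needing care is the distinctness in the last step: the list \(W_1,\ldots,W_q,H\) might a priori contain repetitions. To handle this, I would invoke the equality case of Theorem \ref{thm:max-union} with \(m=q+1\), exactly as in the proof of Corollary \ref{cor:minimum-covers}. That equality case forces the \(q+1\) members of the cover to be distinct hyperplanes. Since a pencil has exactly \(q+1\) members, the list \(W_1,\ldots,W_q,H\) is the whole pencil without repetition. This gives the conclusion, and no further computation should be needed.
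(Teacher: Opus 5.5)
Your proposal is correct and matches the paper's proof step for step: adjoin a hyperplane containing the span of the complement, which gives a cover of \(V\) by \(q+1\) proper subspaces, and then apply Corollary~\ref{cor:minimum-covers}. The only difference is cosmetic: the paper gets distinctness from Lemma~\ref{lem:lower-bound}, since a repetition would give a cover by at most \(q\) proper subspaces, whereas you read it off the equality case of Theorem~\ref{thm:max-union}; both are valid.
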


\begin{proof}
The ``if'' direction follows from the second half of the proof of
Theorem \ref{thm:basis-blocking-number}.

Conversely, suppose \(W_1,\ldots,W_q\) are basis-blocking. Set
\[
X=V\setminus(W_1\cup\cdots\cup W_q).
\]
By Lemma \ref{lem:span-criterion},
\[
\spanop(X)\ne V.
\]
Choose a hyperplane \(H_0\) containing \(\spanop(X)\). Then
\[
V=H_0\cup W_1\cup\cdots\cup W_q.
\]
This expresses \(V\) as a union of \(q+1\) proper subspaces. These
\(q+1\) subspaces must in fact be distinct. If two of them coincided,
then \(V\) would be covered by at most \(q\) proper subspaces,
contradicting Lemma \ref{lem:lower-bound}. Therefore
\[
H_0,W_1,\ldots,W_q
\]
form a cover of \(V\) by exactly \(q+1\) proper subspaces. By Corollary
\ref{cor:minimum-covers}, they are precisely the \(q+1\) hyperplanes
through a common codimension-two subspace \(U\). Hence
\(W_1,\ldots,W_q\) are \(q\) of the \(q+1\) hyperplanes containing \(U\).
\end{proof}

\begin{corollary}\label{cor:basis-avoidance}
Let \(V\) be an \(n\)-dimensional vector space over \(\mathbb F_q\),
where \(n\ge 2\), and let
\[
W_1,\ldots,W_m
\]
be proper subspaces of \(V\). If \(m\le q-1\), then there exists a basis
\[
v_1,\ldots,v_n
\]
of \(V\) such that
\[
v_j\notin W_1\cup\cdots\cup W_m
\]
for every \(1\le j\le n\). The bound \(q-1\) is sharp.
\end{corollary}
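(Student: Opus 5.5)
The corollary is essentially a restatement of Theorem \ref{thm:basis-blocking-number}, so the plan is to split it into existence and sharpness and reduce each half to results already proved.

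For existence, I will rerun the greedy construction from the first half of the proof of Theorem \ref{thm:basis-blocking-number}. Suppose linearly independent vectors \(v_1,\ldots,v_r\) with \(0\le r<n\) have been chosen outside \(W_1\cup\cdots\cup W_m\). Set \(U_r=\spanop(v_1,\ldots,v_r)\). Since \(r<n\), the subspace \(U_r\) is proper, and when \(r=0\) it is \(\{0\}\). The family \(U_r,W_1,\ldots,W_m\) then consists of at most \(m+1\le q\) proper subspaces. By Lemma \ref{lem:lower-bound}, it cannot cover \(V\), so I will pick
\[
v_{r+1}\in V\setminus\bigl(U_r\cup W_1\cup\cdots\cup W_m\bigr).
\]
Because \(v_{r+1}\notin U_r\), the vectors \(v_1,\ldots,v_{r+1}\) remain linearly independent, and \(v_{r+1}\) avoids every \(W_i\). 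After \(n\) steps this gives the required basis.

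An alternative route is to argue by contradiction. If \(W_1,\ldots,W_m\) blocked every basis, then by Lemma \ref{lem:span-criterion} the complement \(V\setminus(W_1\cup\cdots\cup W_m)\) would lie in some hyperplane \(H_0\). Then \(V=H_0\cup W_1\cup\cdots\cup W_m\) would be a cover of \(V\) by at most \(q\) proper subspaces, contradicting Lemma \ref{lem:lower-bound}.

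For sharpness, I will take \(m=q\) and choose \(W_1,\ldots,W_q\) to be \(q\) of the \(q+1\) hyperplanes through a codimension-two subspace \(U\). The omitted hyperplane \(H_0\) contains the complement of their union, since the full pencil covers \(V\) by Proposition \ref{prop:pencil}. A proper subspace cannot contain a basis, so no basis avoids \(W_1\cup\cdots\cup W_q\). This is exactly the second half of Theorem \ref{thm:basis-blocking-number}, and it shows the bound \(q-1\) cannot be raised.

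I do not expect a real obstacle here. The one point to check is the count \(m+1\le q\) in the greedy step, because that is precisely where the hypothesis \(m\le q-1\) is used to invoke Lemma \ref{lem:lower-bound}.
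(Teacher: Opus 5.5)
Your proposal is correct and follows the paper's proof: existence comes from the greedy construction in the first half of Theorem~\ref{thm:basis-blocking-number}, which applies Lemma~\ref{lem:lower-bound} to the at most \(q\) subspaces \(U_r,W_1,\ldots,W_m\), and sharpness comes from \(q\) hyperplanes of a pencil with one member omitted. Your alternative contradiction argument via Lemma~\ref{lem:span-criterion} is also valid, and it is essentially the reasoning the paper uses in the proof of Theorem~\ref{thm:basis-blocking-classification}.
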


\begin{proof}
The existence of such a basis is proved in the first part of Theorem
\ref{thm:basis-blocking-number}. Sharpness follows from Theorem
\ref{thm:basis-blocking-number}, since \(q\) proper subspaces can block
all bases.
\end{proof}

\begin{remark}
The covering number and the basis-blocking number differ by one:
\[
q+1\quad\text{proper subspaces are needed to cover }V,
\]
whereas
\[
q\quad\text{proper subspaces are needed to meet every basis of }V.
\]
The extremal configurations in both cases are governed by pencils of
hyperplanes.
\end{remark}

\section{Affine comparison, recognition, and open problems}

We next record the affine analogue. Let \(\mathbb A^n(\mathbb F_q)\)
denote the \(n\)-dimensional affine space over \(\mathbb F_q\). A proper
affine subspace is a translate of a proper linear subspace.

Two affine hyperplanes are called \emph{parallel} if they are translates
of the same linear hyperplane. A \emph{parallel class} is the family of
all affine hyperplanes obtained as translates of a fixed linear
hyperplane.

\begin{theorem}\label{thm:affine-max}
Let \(n\ge 1\), and let
\[
A_1,\ldots,A_m
\]
be proper affine subspaces of \(\mathbb A^n(\mathbb F_q)\), where
\(1\le m\le q\). Then
\[
|A_1\cup\cdots\cup A_m|\le m q^{n-1}.
\]
The bound is sharp. Equality holds for \(m=1\) if and only if \(A_1\) is
an affine hyperplane. For \(m\ge 2\), equality holds if and only if
\(A_1,\ldots,A_m\) are pairwise disjoint affine hyperplanes; equivalently,
they are distinct members of a parallel class of affine hyperplanes.
\end{theorem}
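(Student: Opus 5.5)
The plan is a trivial union bound plus an inclusion–exclusion analysis of the equality case. Every proper affine subspace \(A_i\) is a translate of a proper linear subspace, so \(|A_i|\le q^{n-1}\). Equality holds exactly when \(A_i\) is an affine hyperplane. Hence
\[
|A_1\cup\cdots\cup A_m|\le \sum_{i=1}^m |A_i|\le mq^{n-1}.
\]
This gives the bound, and it is the step that uses \(m\le q\). Sharpness comes from taking \(m\) distinct translates \(H+c_1,\ldots,H+c_m\) of a fixed linear hyperplane \(H\). A parallel class has exactly \(q\) members, indexed by \(V/H\cong\mathbb F_q\), so such a choice exists for every \(m\le q\). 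Distinct cosets of \(H\) are disjoint, so the union has exactly \(mq^{n-1}\) points.

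For the equality analysis, equality in the chain above forces two things. First, every \(|A_i|=q^{n-1}\), so every \(A_i\) is an affine hyperplane; for \(m=1\) this is the whole claim, and conversely an affine hyperplane has \(q^{n-1}\) points. Second, the union bound is tight, which happens precisely when the \(A_i\) are pairwise disjoint. The converse, that pairwise disjoint affine hyperplanes attain the bound, is immediate from additivity.

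The remaining step is to show that two affine hyperplanes \(A=H+a\) and \(B=H'+b\) are disjoint if and only if they are distinct and parallel. If \(H\ne H'\), then \(H+H'=V\) because both are hyperplanes, so \(b-a=h-h'\) with \(h\in H\), \(h'\in H'\). Then \(a+h=b+h'\) lies in \(A\cap B\), so they meet. Thus disjointness forces \(H=H'\), and then \(A\) and \(B\) are distinct cosets of the same \(H\). Conversely, distinct cosets of \(H\) are disjoint. It follows that for \(m\ge 2\) equality holds exactly when the \(A_i\) are distinct members of one parallel class.

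There is no real obstacle here. The only point requiring care is this lemma that nonparallel affine hyperplanes always intersect, and the argument via \(H+H'=V\) handles it. The edge case \(n=1\), where hyperplanes are points and the bound reads \(m\le q\) points, fits the same argument without change.
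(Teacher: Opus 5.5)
Your proof is correct and follows the paper's argument essentially step for step: the union bound, sharpness via distinct members of one parallel class, and an equality analysis forcing each $A_i$ to be a hyperplane with the $A_i$ pairwise disjoint. Your $H+H'=V$ argument is the coordinate-free form of the paper's remark that two independent linear equations always have a common solution, and one small slip is that the union bound does not use $m\le q$ (it holds for every $m$), since that hypothesis is needed only for sharpness, as you yourself note when you observe that a parallel class has exactly $q$ members.
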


\begin{proof}
Every proper affine subspace of \(\mathbb A^n(\mathbb F_q)\) has at most
\(q^{n-1}\) points. Therefore
\[
|A_1\cup\cdots\cup A_m|
\le
\sum_{i=1}^m |A_i|
\le
m q^{n-1}.
\]
This proves the upper bound.

Sharpness is achieved by taking \(m\) distinct affine hyperplanes from a
single parallel class. For instance, if \(a_1,\ldots,a_m\) are distinct
elements of \(\mathbb F_q\), then
\[
x_1=a_1,\quad x_1=a_2,\quad \ldots,\quad x_1=a_m
\]
are pairwise disjoint affine hyperplanes, and their union has size
\(m q^{n-1}\).

The equality statement follows from the two inequalities above. Equality
forces each \(A_i\) to have size \(q^{n-1}\), so each \(A_i\) is an
affine hyperplane. Equality also forces
\[
|A_1\cup\cdots\cup A_m|=\sum_{i=1}^m |A_i|,
\]
so the affine hyperplanes must be pairwise disjoint. Finally, pairwise
disjoint affine hyperplanes are parallel: if two affine hyperplanes have
different directions, their defining linear equations are independent,
and the corresponding system has a solution over \(\mathbb F_q\), so the
two affine hyperplanes intersect. Therefore disjoint affine hyperplanes
must have the same direction.
\end{proof}

Taking \(m=q\) gives the affine covering number.

\begin{corollary}\label{cor:affine-cover}
The least number of proper affine subspaces needed to cover
\(\mathbb A^n(\mathbb F_q)\), where \(n\ge 1\), is \(q\). Moreover, the
covers of size \(q\) are precisely the parallel classes of affine
hyperplanes.
\end{corollary}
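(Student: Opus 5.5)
The proof will split into a lower bound, an upper bound, and the structural description of covers of size \(q\), all derived from Theorem \ref{thm:affine-max}.

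\emph{Lower bound.} Suppose \(m\le q-1\) proper affine subspaces cover \(\mathbb A^n(\mathbb F_q)\). Theorem \ref{thm:affine-max} applies because \(m\le q\), and it gives
\[
|A_1\cup\cdots\cup A_m|\le mq^{n-1}\le (q-1)q^{n-1}<q^n.
\]
So they cannot cover the space, and at least \(q\) are needed.

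\emph{Upper bound.} Fix a linear hyperplane, for instance \(x_1=0\). Its \(q\) translates \(x_1=a\), with \(a\in\mathbb F_q\), form a parallel class. These translates are pairwise disjoint, and each has \(q^{n-1}\) points. Hence their union has \(q\cdot q^{n-1}=q^n\) points, so it is the whole space. (For \(n=1\) they are the \(q\) single points, and the same argument applies.) Therefore the covering number is exactly \(q\).

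\emph{Classification.} Let \(A_1,\ldots,A_q\) be proper affine subspaces covering the space. Then their union has size \(q^n=q\cdot q^{n-1}\), which is the equality case of Theorem \ref{thm:affine-max} with \(m=q\).
\begin{itemize}
\item If \(q\ge 2\), the equality clause for \(m\ge 2\) says the \(A_i\) are pairwise disjoint affine hyperplanes, that is, distinct members of one parallel class. A parallel class has exactly \(q\) members, since a linear hyperplane has \(q^n/q^{n-1}=q\) cosets. So the \(A_i\) are precisely a whole parallel class.
\item The case \(m=1\) does not occur, since \(q\ge 2\) always.
\end{itemize}
Conversely, every parallel class is a cover of size \(q\) by the upper-bound construction.

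The only point needing care is the count that a parallel class has exactly \(q\) members, which upgrades "distinct members of a parallel class" to "a full parallel class." This is the index computation \(q^n/q^{n-1}=q\), so I expect no real obstacle.
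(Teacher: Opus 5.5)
Your proof is correct and follows the paper's argument essentially verbatim. Like the paper, you use the family \(x_1=a\) for the upper bound, the estimate \((q-1)q^{n-1}<q^n\) from Theorem~\ref{thm:affine-max} for the lower bound, and the equality case of that theorem with \(m=q\) for the classification; your explicit count that a parallel class has exactly \(q\) members, so that \(q\) distinct members make up the whole class, fills in a step the paper leaves implicit.
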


\begin{proof}
The \(q\) affine hyperplanes
\[
x_1=a,\qquad a\in\mathbb F_q,
\]
cover \(\mathbb A^n(\mathbb F_q)\). Conversely, by Theorem
\ref{thm:affine-max}, \(q-1\) proper affine subspaces cover at most
\[
(q-1)q^{n-1}<q^n
\]
points, so they cannot cover the affine space. If \(q\) proper affine
subspaces cover \(\mathbb A^n(\mathbb F_q)\), then equality holds in
Theorem \ref{thm:affine-max}, and hence the subspaces form a parallel
class of affine hyperplanes.
\end{proof}

\begin{remark}
The linear and affine covering numbers differ:
\[
q+1
\quad\text{for linear subspaces of } \mathbb F_q^n,
\]
but
\[
q
\quad\text{for affine subspaces of } \mathbb A^n(\mathbb F_q).
\]
The difference is geometric. Linear hyperplanes all pass through the
origin, whereas affine hyperplanes may form disjoint parallel classes.
\end{remark}

We now include a simple recognition criterion for extremal families of
hyperplanes. This is useful from a computational perspective: if the
subspaces are given by linear equations, one can check extremality by a
rank computation.

Assume \(V=\mathbb F_q^n\). A hyperplane \(H\le V\) can be written as
\[
H=\ker \ell
\]
for a nonzero linear functional \(\ell\in V^\ast\), unique up to
multiplication by a nonzero scalar.

\begin{proposition}\label{prop:recognition-max}
Let \(m\ge 2\), and let
\[
H_i=\ker \ell_i,\qquad 1\le i\le m,
\]
be distinct hyperplanes of \(V=\mathbb F_q^n\). Then
\(H_1,\ldots,H_m\) have maximum possible union size among \(m\) proper
subspaces if and only if
\[
\rank\{\ell_1,\ldots,\ell_m\}=2.
\]
Equivalently, the projective points \([\ell_1],\ldots,[\ell_m]\) lie on
one projective line in \(\mathbb P(V^\ast)\).
\end{proposition}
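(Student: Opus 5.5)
The plan is to reduce the statement to the equality classification in Theorem \ref{thm:max-union} and then translate ``common codimension-two subspace'' into linear algebra in the dual space \(V^\ast\). Since \(H_1,\ldots,H_m\) are distinct and \(m\ge 2\), Theorem \ref{thm:max-union} says their union has the maximum possible size \(q^{n-2}(1+m(q-1))\) if and only if there is a codimension-two subspace \(U\le V\) contained in every \(H_i\). So it suffices to prove that such a \(U\) exists if and only if \(\rank\{\ell_1,\ldots,\ell_m\}=2\).

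The key tool is the annihilator correspondence. For a subspace \(L\le V^\ast\), set \(L^\circ=\bigcap_{\ell\in L}\ker\ell\). Then \(\dim L^\circ=n-\dim L\). Moreover, a subspace \(U\) is contained in \(\ker\ell\) exactly when \(\ell\) vanishes on \(U\). With this in hand, the two directions go as follows.

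\emph{Rank \(2\) implies a common codimension-two subspace.} Suppose \(L=\spanop(\ell_1,\ldots,\ell_m)\) has dimension \(2\). Put \(U=L^\circ\). Then \(\dim U=n-2\), and \(U\subseteq\ker\ell_i=H_i\) for every \(i\).

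\emph{A common codimension-two subspace implies rank \(2\).} Suppose \(U\subseteq H_i\) for all \(i\), with \(\dim U=n-2\). Each \(\ell_i\) vanishes on \(U\), so each \(\ell_i\) lies in the annihilator \(U^\perp=\{\ell\in V^\ast:\ell|_U=0\}\), which has dimension \(2\). Hence the rank is at most \(2\). The rank is at least \(2\) because \(H_1\ne H_2\) forces \(\ell_1\) and \(\ell_2\) to be non-proportional: proportional functionals have the same kernel.

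The projective reformulation is then immediate. The points \([\ell_i]\) lie on one projective line exactly when the \(\ell_i\) lie in a common two-dimensional subspace of \(V^\ast\). Given that at least two of the points are distinct, this is equivalent to the span being exactly two-dimensional.

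There is no real obstacle. The only points needing care are the dimension formula for annihilators and the observation that distinct kernels force non-proportional functionals, which is what gives rank at least \(2\) rather than merely at most \(2\).
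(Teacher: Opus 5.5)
Your proof is correct and follows the paper's argument: you reduce to the equality case of Theorem~\ref{thm:max-union} and translate ``common codimension-two subspace'' into ``all $\ell_i$ lie in the two-dimensional annihilator $U^\perp$.'' You also make explicit the lower bound $\rank\ge 2$ coming from $H_1\ne H_2$, which the paper leaves tacit. Like the paper, you implicitly need $m\le q+1$ to invoke that theorem, and this matters: for $m>q+1$ and $n\ge 3$ the statement fails, since a full pencil plus one hyperplane not containing its axis covers $V$ but has rank $3$.
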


\begin{proof}
By Theorem \ref{thm:max-union}, an extremal family with \(m\ge 2\)
consists of distinct hyperplanes containing a common codimension-two
subspace \(U\). This is equivalent to
\[
U\subseteq \bigcap_{i=1}^m H_i
\]
with \(\codim U=2\). In dual language, the hyperplanes containing \(U\)
are precisely the kernels of nonzero linear functionals belonging to the
two-dimensional subspace
\[
U^\perp=\{\ell\in V^\ast:\ell(u)=0\text{ for all }u\in U\}.
\]
Thus the functionals \(\ell_1,\ldots,\ell_m\) span a two-dimensional
subspace of \(V^\ast\).

Conversely, if \(\rank\{\ell_1,\ldots,\ell_m\}=2\), then
\[
U=\bigcap_{i=1}^m \ker \ell_i
\]
has codimension two, and each \(H_i\) contains \(U\). Since the \(H_i\)
are distinct, Theorem \ref{thm:max-union} gives equality in the maximum
union bound.
\end{proof}

This gives the following elementary algorithm.

\begin{algorithm}[Recognition of maximum-union families]
Input: proper subspaces \(W_1,\ldots,W_m\le \mathbb F_q^n\), with
\(2\le m\le q+1\).

\begin{enumerate}
\item Check whether each \(W_i\) has dimension \(n-1\). If not, the
family is not extremal.
\item For each \(i\), write \(W_i=\ker \ell_i\) for a nonzero linear
functional \(\ell_i\).
\item Check whether the projective classes \([\ell_i]\) are distinct.
If not, the family is not extremal.
\item Compute the rank of the matrix whose rows are the coefficient
vectors of \(\ell_1,\ldots,\ell_m\).
\item The family is extremal if and only if this rank is \(2\).
\end{enumerate}
\end{algorithm}

\begin{remark}
The same rank test recognizes minimum basis-blocking families of size
\(q\). Namely, \(q\) hyperplanes form a minimum basis-blocking family if
and only if their defining linear forms are projectively distinct and
span a two-dimensional subspace of \(V^\ast\).
\end{remark}

We close with examples and open problems.

\begin{example}
Let \(V=\mathbb F_2^2\). The three one-dimensional subspaces are
\[
\mathbb F_2(1,0),\qquad
\mathbb F_2(0,1),\qquad
\mathbb F_2(1,1).
\]
For \(m=1,2,3\), Theorem \ref{thm:max-union} gives
\[
2^{0}\bigl(1+m(2-1)\bigr)=1+m.
\]
Thus one proper subspace covers at most \(2\) vectors, two proper
subspaces cover at most \(3\) vectors, and three proper subspaces cover
all \(4\) vectors.
\end{example}

\begin{example}
Let \(V=\mathbb F_3^2\). The four one-dimensional subspaces form the
pencil of lines through the origin. For \(m=1,2,3,4\), Theorem
\ref{thm:max-union} gives
\[
3^{0}\bigl(1+m(3-1)\bigr)=1+2m.
\]
Thus \(m\) lines through the origin cover at most
\[
3,\ 5,\ 7,\ 9
\]
points, respectively. The maximum is achieved by any \(m\) distinct lines
through the origin.
\end{example}

\begin{example}
Let \(V=\mathbb F_2^3\). Fix a one-dimensional subspace \(U\le V\). The
three planes containing \(U\) form a pencil. For \(m=1,2,3\), Theorem
\ref{thm:max-union} gives
\[
2^{1}\bigl(1+m(2-1)\bigr)=2+2m.
\]
Thus one plane covers \(4\) vectors, two planes from the pencil cover
\(6\) vectors, and all three planes cover \(8\) vectors.
\end{example}

\begin{problem}
Fix \(1\le r<n\). Determine the maximum size of a union of \(m\)
\(r\)-dimensional linear subspaces of \(\mathbb F_q^n\), and classify the
equality cases.
\end{problem}

\begin{problem}
Give stability versions of Theorem \ref{thm:max-union}. For example, if
\(m\le q+1\) proper subspaces have union size close to
\[
q^{n-2}\bigl(1+m(q-1)\bigr),
\]
must most of them be close to a common pencil of hyperplanes?
\end{problem}

\begin{problem}
Develop a matroidal analogue of basis-blocking families. In a vector
space, bases are the bases of the associated linear matroid, and proper
subspaces correspond to flats. For which matroids can one determine the
least number of proper flats whose union meets every basis?
\end{problem}

\begin{problem}
Study algorithmic recognition for near-extremal families. Given
subspaces \(W_1,\ldots,W_m\le \mathbb F_q^n\), how efficiently can one
decide whether their union is within a prescribed error of the maximum
possible size?
\end{problem}

\begin{remark}
The results in this paper show that several extremal covering questions
for finite vector spaces are controlled by the same geometric object:
pencils of hyperplanes through codimension-two subspaces. The maximum
partial covers, minimum full covers, sharp one-step defects, and minimum
basis-blocking families all arise from this single configuration.
\end{remark}
\bigskip

\noindent \textbf{Declarations}

\medskip
\noindent\textbf{\small Funding:} {\small The research of Mohsen Aliabadi is partially supported by an AMS--Simons Research Enhancement Grant (award no.~SFI-MPS-T-Institutes-00021934).} 

\medskip
\noindent \textbf{\small Conflict of interest:} {\small The authors declare that they have no conflict of interest.}

\medskip
\noindent \textbf{\small Data availability statement:} {\small Data sharing is not applicable to this article as no datasets were generated or analyzed during the current study.}


\end{document}